\documentclass[11pt]{article}

\usepackage{graphicx}
\usepackage[T1]{fontenc}
\usepackage[latin1]{inputenc}
\usepackage{enumerate}
\usepackage{pstricks}
\usepackage{latexsym,amssymb,amsthm,amsxtra}
\usepackage{amsmath}
\usepackage{tikz}
\usepackage{pgf}
\usepackage[numbers,sort]{natbib}
\usepackage{hyperref}

\def\epsilon{\varepsilon}

\usepackage{anysize}
\marginsize{3cm}{2.5cm}{2.5cm}{2.5cm} 
\addtolength{\parskip}{0.5\baselineskip}

\newtheorem{theorem}{Theorem}[section]
\newtheorem{lemma}[theorem]{Lemma}

\newtheorem{prop}[theorem]{Proposition}

\newtheorem{remark}[theorem]{Remark}

\newcommand{\be}{ \begin{equation}}
\newcommand{\ee}{\end{equation}}

\def\E{{\mathbb E}}

\def\P{{\mathbb P}}
\def\Q{{\mathbb Q}}
\def\R{{\mathbb R}}

\def\F{{\mathcal{F}}}

\def\L{{\mathcal{L}}}

%
%

%
%

\def\ve{{\varepsilon}}

\def\({{\Bigl(}}
\def\){{\Bigr)}}

\def\one{{\mathbf 1}}

\def\square{\ifmmode\sqr\else{$\sqr$}\fi}
\def\sqr{\vcenter{
         \hrule height.1mm
         \hbox{\vrule width.1mm height2.2mm\kern2.18mm\vrule width.1mm}
         \hrule height.1mm}}                  

\topmargin -0.5in
\textheight 23cm
\oddsidemargin -1mm       
\evensidemargin 0.25in 
\textwidth 16.5cm
\parskip=3pt plus 1pt minus 1pt

\title{Front propagation and quasi-stationary distributions for one-dimensional L\'evy processes}
\author{Pablo Groisman\thanks{Departamento de Matem\'atica, FCEN, Universidad de Buenos Aires, IMAS-CONICET and NYU-ECNU Institute of Mathematical Sciences at NYU Shanghai. {\tt
pgroisma@dm.uba.ar}, {\tt http://mate.dm.uba.ar/$\sim$pgroisma.}}
  \ and  Matthieu Jonckheere\thanks{Instituto de C\'alculo, FCEN, Universidad de Buenos Aires and IMAS-CONICET. {\tt mjonckhe@dm.uba.ar},
 {\tt http://matthieujonckheere.blogspot.com}.
} }

\date{}
\begin{document}
\maketitle

\abstract{

We jointly investigate the existence of quasi-stationary distributions for one dimensional L\'evy processes and the existence of traveling waves for the Fisher-Kolmogorov-Petrovskii-Piskunov (F-KPP) equation associated with the same motion. 
Using probabilistic ideas developed by S. Harris \cite{SH}, we show that the existence of a traveling wave for the F-KPP equation associated with a centered L\'evy processes that branches at rate $r$ and travels at velocity $c$ is equivalent to the existence of a quasi-stationary distribution for a L\'evy process with the same movement but drifted by $-c$ and killed at zero, with mean absorption time $1/r$. This also extends the known existence conditions in both contexts.  
As it is discussed in \cite{GJ2}, this is not just a coincidence but the consequence of a relation between these two phenomena.

\bigskip

{\bf \em Keywords}:
quasi-stationary distributions, traveling waves, branching random walk, branching L\'evy proceses.

{\bf \em MSC 2010}: 60J68, 60J80, 60G51.

\section{Introduction}
Let $\L$ be the generator of a centered one-dimensional L\'evy process (precise definitions and assumptions are be given below) and consider the (generalized) F-KPP equation
\begin{equation}
 \label{KPP.Levy}
\begin{array}{l}
\displaystyle \frac{\partial u}{\partial t}= \displaystyle  \L^* u + r (u^2 -u), \quad x \in \R,\,
t>0,\\
\\
\displaystyle u(0,x)= \displaystyle u_0(x), \quad x \in \R.
\end{array}
\end{equation}
Here $\L^*$ denotes the adjoint of $\L$. Both Fisher and Kolmogorov, Petrovskii and Piskunov considered this equation for $\L = \frac{d^2}{dx^2}$ and proved independently that in this case this equation admits traveling wave solutions of the form $u(t,x) = w_c(x-ct)$ that travel at velocity $c$ for every $c\ge \sqrt{2r}$, \cite{Fisher, KPP}.

It is well known \cite{B2, Langer, Saarloos, BD} that a large class of equations describing the
propagation of a front into an unstable region have properties similar to \eqref{KPP.Levy}. These equations admit traveling-wave solutions for any velocity $c$ larger than a minimal velocity $c^*$ and the front moves with this minimal velocity $c^*$ for any initial data with ``light enough'' tails.

For the Brownian case  $\L = \frac{d^2}{dx^2}$ we have $c^*=\sqrt{2r}$ and for more general $\L$ the minimal velocity can be computed in terms of the Legendre transform of the process (see Theorem \ref{theo:main} below). This was essentially done by Kyprianou \cite{Ky} using the seminal McKean's representation \cite{MK} for the solutions of \eqref{KPP.Levy}. We complete this characterization in this note to arrive to our main theorem. 

The theory of quasi-stationary distributions has its own counterpart.
It is a typical situation that there is an infinite number of quasi-stationary
distributions while the {\em Yaglom limit} (the limit of the conditioned
evolution of the process started from a deterministic initial condition)
selects the minimal one, i.e. the one with minimal expected time of absorption \cite{FMP2,VD,cavender}.

To be more precise, consider a L\'evy process $(X_t - ct)_{t\ge 0}$ with generator $\L - c\frac{d}{dx}$ killed at the origin defined in certain filtered space $(\Omega, \F, (\F_t),\P)$ with expectation denoted by $\E$. The absorption time is defined by $\tau=\inf\{t>0 \colon X_t -ct
= 0\}$. The conditioned evolution at time $t$ is defined by
\[
 \mu_t^\gamma(\cdot):= \P^\gamma(X_t -ct\in  \cdot|\tau>t).
\]

Here $\gamma$ denotes the initial distribution of the process and $\P^\gamma(\cdot)=\P(\cdot|X_0\sim\gamma)$. A probability measure
$\nu$ is
said to be a quasi-stationary distribution (QSD) if $\mu_t^\nu = \nu$ for all
$t\ge 0$.
%

The {\em Yaglom limit} is a probability measure $\nu$ defined by
\[
\nu:= \lim_{t \to \infty} \mu_t^{\delta_x},
\]
if the limit exists and does not depend on $x$. It is known that if the Yaglom limit
exists, then it is a QSD. A general principle is that the Yaglom limit
{\em selects} the minimal QSD, i.e. the Yaglom limit is the QSD with minimal mean absorption time. This fact has been proved for a
wide class of processes that include birth and death process,
subcritical Galton-Watson
processes, drifted random walks and Brownian motion among others, but the conjecture is still open for a much wider class of processes.

In the last decades, a great deal of attention has been given to establish on the one hand conditions for the existence of quasi-stationary measures of L\'evy processes (see for instance \cite{MPSM,KP1}) and on the other hand to the existence of traveling waves for \eqref{KPP.Levy} \cite{Ky}. The purpose of this note is to show that given parameters $r,c >0$, the existence of a traveling wave for \eqref{KPP.Levy} with velocity $c$ is equivalent to the existence of a QSD $\nu$ for $\L - c\frac{d}{dx}$ with expected absorption time $\E_\nu(\tau)=1/r$. Moreover, minimal velocity TWs are in a one-to-one correspondence with minimal absorption time QSDs with the same parameters. Note that when dealing with traveling-waves the branching rate $r$ is an input while the velocity $c$ is chosen by the system, while when dealing with QSDs the velocity $c$ is the input and $r$ is chosen by the system.

Although our proof consists in showing that the conditions for the existence of TW and QSD coincide, in a companion paper \cite{GJ2} we show that these is not just a coincidence but that the two phenomena are essentially two faces of the same coin.

All in all, our main result reads. 

\begin{theorem}
\label{theo:main}
Under assumption  {\bf A} (stated below), the following are equivalent:
\begin{enumerate}
\label{TWKPP}
\item  There exists a non trivial traveling wave for \eqref{KPP.Levy} with velocity $c$, i.e. a solution to
\begin{equation}
\label{eq.tw.levy}
\L^* w + c w'+ r w (w-1)=0.
\end{equation}
\item There exists an (absolutely continuous) QSD for $\L - c \frac{d}{dx}$ with expected absorption time  $1/r$, i.e. a solution to,
\begin{equation}
\label{eq.density.qsd}
\L^* v + c v'+ r v =0.
\end{equation}
\item $r\le  \Gamma(c)$,
where $\Gamma$ is the Legendre transform of the Laplace exponent of $\L$.
\item
A branching L\'evy process driven by $\L - c \frac{d}{dx}$, absorbed in $0$ gets almost surely
extinct.
\end{enumerate}

Moreover, $c$ is a minimal velocity for $(\L^*, r)$ if and only if $1/r$ is a minimal mean absorption time for $\L - c\frac{d}{dx}$.
\end{theorem}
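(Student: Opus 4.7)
The plan is to route all four equivalences through condition (3), and then obtain the moreover clause from the resulting description of the admissible $(c,r)$-region. The dispersion relation $c\lambda - \psi(\lambda) = r$, with $\psi$ the Laplace exponent of $\L$, appears both when one inserts $v(x) = e^{-\lambda x}$ into \eqref{eq.density.qsd}, using $\L^* e^{-\lambda x} = \psi(\lambda) e^{-\lambda x}$, and when one linearizes \eqref{eq.tw.levy} at the unstable state $w = 1$: writing $w = 1 - \varepsilon$ and discarding the $\varepsilon^2$ term yields \emph{exactly} the QSD equation $\L^* \varepsilon + c \varepsilon' + r \varepsilon = 0$. A positive root $\lambda > 0$ of the dispersion relation exists iff $r \le \Gamma(c)$. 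For (2) $\Leftrightarrow$ (3), the forward direction exhibits the QSD density as an exponential (or a combination enforcing $v(0)=0$), while the reverse direction classifies positive integrable solutions of \eqref{eq.density.qsd} on $(0,\infty)$ via roots of the dispersion relation. For (3) $\Leftrightarrow$ (4), I would use a rightmost-particle / Biggins-type analysis of branching L\'evy processes: the asymptotic speed of the rightmost particle of the drifted branching process is $v^\ast = \inf_{\lambda > 0}(\psi(\lambda) + r)/\lambda - c$, and absorption at $0$ from $x > 0$ yields almost sure extinction precisely when $v^\ast \le 0$, which rearranges to $\Gamma(c) \ge r$.

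The core equivalence (4) $\Leftrightarrow$ (1) is the main probabilistic content, following Harris \cite{SH} and Kyprianou \cite{Ky}. For (1) $\Rightarrow$ (4), I would apply McKean's representation to \eqref{KPP.Levy}: a non-trivial TW $w$ with $w(0) = 0$ satisfies $w(x) = \E_x \prod_i w(X_i(t) - ct)$ over the branching system for every $t$, and using absorption at $0$ as $t \to \infty$ one sees that the product can stay bounded away from $0$ and $1$ only if the alive population eventually vanishes, i.e., the drifted absorbed branching process is extinct almost surely. For (4) $\Rightarrow$ (1), I would use the additive martingale $W_t(\lambda) = \sum_i e^{-\lambda X_i(t) - t(\psi(\lambda) - c\lambda + r)}$ of the drifted absorbed branching process (sum over alive particles), where $\lambda > 0$ is the positive root of $c\lambda - \psi(\lambda) = r$: under almost sure extinction, $W_t(\lambda)$ converges to an a.s.\ finite limit $W_\infty(\lambda)$, and $w(x) := 1 - \E_x[\exp(-\theta W_\infty(\lambda))]$ for a suitable $\theta > 0$ solves \eqref{eq.tw.levy} and is non-trivial, with $w(0) = 0$ and $w(+\infty) = 1$.

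The moreover clause drops out of (3): the admissible region is $\{(c, r) : r \le \Gamma(c)\}$, so for fixed $c$ the minimal admissible mean absorption time is $1/\Gamma(c)$, and for fixed $r$ the minimal admissible velocity is $c^\ast(r) = \Gamma^{-1}(r)$; both extremal regimes are attained exactly on the boundary curve $r = \Gamma(c)$, yielding the stated correspondence. The principal obstacle will be the direction (4) $\Rightarrow$ (1) at criticality $r = \Gamma(c)$, where the additive martingale $W_t(\lambda)$ fails to be uniformly integrable and $W_\infty(\lambda)$ vanishes almost surely; there the non-trivial travelling wave must instead be built from the derivative martingale in the spirit of Neveu--Kyprianou, and the boundary behavior of the resulting $w$ at $0$ and at $+\infty$ requires additional care to rule out degeneration to $0$ or $1$.
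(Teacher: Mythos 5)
Your overall skeleton (routing everything through condition (3), using branching representations and martingales) matches the paper's, but two of your key steps have genuine gaps, and they are exactly at the points where the paper has to work hardest.

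First, the equivalence (2) $\Leftrightarrow$ (3) cannot be done by the dispersion relation. Inserting $v(x)=e^{-\lambda x}$ and using $\L^* e^{-\lambda x}=\psi(\lambda)e^{-\lambda x}$ is only valid for the exponential defined on \emph{all} of $\R$. A QSD density for the killed process vanishes on the whole half-line $(-\infty,0]$ (the killed process can jump over the boundary, so the relevant boundary condition is not just $v(0)=0$), and the operator $\L^*$ is nonlocal: for the truncated exponential one picks up an error term $-e^{-\lambda x}\int_{\{y>x\}}e^{\lambda y}\,\Pi(dy)$, which is nonzero whenever $\Pi$ charges $(0,\infty)$. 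Hence no finite combination of exponentials solves \eqref{eq.density.qsd}, and likewise there is no ODE-style ``classification of positive integrable solutions by roots'' for the converse. The paper's existence proof instead writes $v=e^{-\theta x}h(x)$ with $\psi(\theta)-c\theta=-r$ (possible iff $r\le\Gamma(c)$), uses the Girsanov tilting of Lemma \ref{Girs.Levy} to reduce \eqref{eq.density.qsd} to harmonicity of $h$ for the killed tilted process, and takes $h$ to be the renewal function of the ladder height process, with the renewal theorem giving integrability of $e^{-\theta x}h(x)$; the nonexistence direction is a separate martingale argument over a branching process combined with convergence of the decay parameters $\lambda_n\searrow\Gamma(c)$ of the process killed at $\{\tfrac1n, n\}$. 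None of this is circumvented by your sketch.

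Second, the critical case $r=\Gamma(c)$ -- which is the only part of the theorem not already in Kyprianou's work, i.e.\ the actual content -- is left unproven in your proposal at two places. In (3) $\Rightarrow$ (4), the speed statement $R_t/t\to\Gamma^{-1}(r)-c$ gives extinction only when $r<\Gamma(c)$; at criticality the speed is $0$ and this says nothing about $R_t\to-\infty$. The paper closes this with a spine argument: choosing $\theta_c$ with $\psi'_c(\theta_c)=0$, the spine under the tilted measure $\Q$ is a centered, hence recurrent, L\'evy process, forcing the additive martingale to diverge under $\Q$ and therefore to converge to $0$ under $\P$, which pushes the maximum to $-\infty$. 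In (4) $\Rightarrow$ (1), your additive martingale ``of the drifted absorbed branching process (sum over alive particles)'' converges to $0$ almost surely under hypothesis (4) -- after the a.s.\ finite extinction time the sum is empty -- so $w\equiv 0$; and even reinterpreting it as the free process's martingale, you yourself concede that at criticality it degenerates and a derivative-martingale construction with ``additional care'' would be needed, which is precisely the statement to be proven, not a remark one can defer. The paper avoids the derivative martingale altogether via Neveu's device: under (4) the total progeny $G_x$ of the absorbed process is a.s.\ finite, $(G_x)_{x\ge 0}$ is a continuous-time Galton--Watson process, and $w(x)=f_x^{-1}(s)$ with $f_x(s)=\E(s^{G_x})$ yields a multiplicative martingale $w(x+y)^{G_y}$ whose limit gives the fixed-point equation $w(x)=\E\prod_{i} w(x+\bar\zeta^i_t)$ and hence \eqref{eq.tw.levy}, uniformly for all $r\le\Gamma(c)$. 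Your treatment of the moreover clause (minimality $\Leftrightarrow$ $r=\Gamma(c)$ by monotonicity of $\Gamma$) is fine and agrees with the paper.
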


\begin{remark}
In \eqref{eq.tw.levy} the domain is $\R$ and the boundary conditions are $w(+\infty)=1-w(-\infty)=1$, while in \eqref{eq.density.qsd} the domain is $(0,+\infty)$ and also $v\ge 0$, $v(0)=0$, $\int v =1$ is imposed.
\end{remark}

%

\section{Preliminaries} \label{sec:Levy}


Let $X=(X_t)_{t\ge 0}$ be a L\'evy process with values in $\R$, defined on
a filtered space $(\Omega, \F, (\F_t),\P)$ and Laplace exponent
$\psi: \mathbb R \to  \mathbb R $ defined by
\[
\E(e^{\theta X_t})= e^{\psi(\theta ) t},
\]
such that
\[
\psi(\theta)= b\theta+ \sigma^2 \frac{\theta^2}{2} + g(\theta),
\]
where $b\in\R$, $\sigma>0$ (which ensures that $X$ is non-lattice) and $g$ is defined in terms of the jump measure $\Pi$ supported in $\R\setminus\{0\}$ by
\[
g(\theta)=\int_\R (e^{\theta x}- 1 -\theta x \one_{\{|x|<1\}}) \Pi(dx),  \qquad \int_\R (1 \wedge x^2) \Pi(dx) < \infty.
\]
Let $\theta_+^\star=\sup\{\theta\colon |\psi(\theta)|<\infty\}$, $\theta^\star_{-}=\inf\{\theta\colon |\psi(\theta)|<\infty\}$ and recall that $\psi$ is strictly convex in $(\theta^\star_-,\theta^\star_+)$ and by monotonicity $\psi(\theta^\star_\pm)=\psi(\theta^\star\mp)$ and $\psi'(\theta^\star_\pm)=\psi'(\theta^\star\mp)$ are well defined as well as the derivative at zero $\psi'(0)=\E(X_1)$, that we assume to be zero. We also assume that $\theta^\star_\pm >0$.  The generator of $X$ applied to a function $f\in C_0^2$, the class of compactly supported functions with continuous second derivatives, gives
\[
\L f(x)=\frac12 \sigma^2 f''(x) + bf'(x) + \int_\R (f(x+y) - f(x) - yf'(x)\one{\{|y|\le 1\}} ) \Pi(dy).
\]
The adjoint of $\L$ is also well defined in $C_0^2$ and has the form
\[
 \L^*f(x)=\frac12 \sigma^2 f''(x) - bf'(x) + \int_\R (f(x-y) - f(x) + yf'(x)\one{\{|y|\le 1\}} ) \Pi(dy).
\]
It is immediate to see that the Laplace exponent of $(X_t - ct)_{t \ge 0}$ is given by
$\psi_c(\theta)=\psi(\theta) - c \theta$ for $\theta \in [\theta^\star_-,\theta^\star_+	]$ and that
$C_0^2$ is contained in the domain of the generator $\L - c \frac{d}{dx}$. We denote by $\Gamma$ the Legendre transform of $\psi$, i.e., 
$$\Gamma(\alpha)= \sup_{\theta \in \mathbb R} \alpha \theta- \psi(\theta).$$
Similarly we will denote $\bar \Gamma$ the Legendre transform of the Laplace exponent of the dual process $(-X_t)_{t\ge 0}$,
$$\bar \Gamma(\alpha)= \sup_{\theta \in \mathbb R} \alpha \theta- \psi(-\theta).$$
Observe that since $\sigma>0$, $\Gamma$ as well as $\bar \Gamma$ are defined in $\R$. To summarize, hereafter we assume

\bigskip
\hspace{1cm}({\bf A}) $\sigma >0,$ $\theta^\star_\pm >0$ and $\E(X_1)=0$. 
\bigskip

Recall that the backward Kolmogorov equation for $X$ is given by
$$\frac{d}{dt} \E^x(f(X_t))= \L f(x),$$
while the forward Kolmogorov (or Fokker-Plank) equation for the density $u$ (which exists  since $\sigma>0$) is given by
$${d \over dt} u(t,x)= \L^*u (t,\cdot)(x).$$

We will consider on the one hand L\'evy processes with generator $\L$ (or $\L^*$) that evolve in $\R$ and on the other hand L\'evy processes with generator $\L -c \frac{d}{dx}$, killed at zero. A probability measure in $\R_+$ with density $v$ is a QSD for the process $(X_t -c t)_{t\ge 0}$ killed at $0$, if and only if, $v$ is a positive solution of \eqref{eq.density.qsd}. 

We will need the following.

\begin{lemma}[Girsanov theorem for L\'evy processes]\label{Girs.Levy}Let
$M^\theta_t:= \exp(\theta X_t - \psi_c(\theta)t)$ and the measure $\tilde\Q$ be defined by 
\begin{equation}
\label{change.of.measure}
 \frac{d\tilde\Q}{d\P}\Big|_{\F_t}= M^\theta_t, \qquad t\in [0,+\infty).
\end{equation}
Then $(M^\theta_t)_{t\ge 0}$ is a martingale and under $\tilde\Q$, $(X_t)_{t\ge 0}$ is a L\'evy process with drift $\E_{\tilde \Q}(X_1)=\psi'_c(\theta)= \psi'(\theta) -c$,
variance $\sigma^2$, and jump measure $ e^{\theta x} d\pi(x)$.
\end{lemma}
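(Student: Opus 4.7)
The plan is to follow the classical exponential-tilting scheme, in three steps: martingale property, consistent extension to $\tilde\Q$, and identification of the new L\'evy triplet. For the martingale property, the hypothesis $\theta\in(\theta^\star_-,\theta^\star_+)$ makes each $M^\theta_t$ positive and integrable, and the stationary, independent increments of the process (with Laplace exponent $\psi_c$) give, via the factorization
\begin{equation*}
M^\theta_t = M^\theta_s\exp\bigl(\theta(X_t-X_s)-\psi_c(\theta)(t-s)\bigr),
\end{equation*}
the identity $\E[M^\theta_t\mid\F_s]=M^\theta_s$. Since $\E[M^\theta_t]=1$, \reff{change.of.measure} is consistent along $(\F_t)$ and extends to a bona fide probability $\tilde\Q$ by Kolmogorov extension.

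To identify the law of $X$ under $\tilde\Q$, I would compute the conditional Laplace transform of its increments: for $s<t$ and $\lambda$ in a neighborhood of $0$,
\begin{equation*}
\E_{\tilde\Q}\bigl[e^{\lambda(X_t-X_s)}\mid\F_s\bigr]
 = \frac{\E[M^\theta_t\,e^{\lambda(X_t-X_s)}\mid\F_s]}{M^\theta_s}
 = \exp\bigl([\psi_c(\lambda+\theta)-\psi_c(\theta)](t-s)\bigr).
\end{equation*}
Since this is deterministic and depends only on $t-s$, $X$ has stationary and independent increments under $\tilde\Q$ and is therefore a L\'evy process with Laplace exponent $\tilde\psi(\lambda):=\psi_c(\lambda+\theta)-\psi_c(\theta)$. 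Differentiating at $\lambda=0$ already gives the announced mean $\tilde\psi'(0)=\psi'_c(\theta)=\psi'(\theta)-c$.

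Inserting the L\'evy-Khintchine decomposition of $\psi$ into $\tilde\psi$ and matching term-by-term with the canonical form yields the full triplet: the translation $\lambda\mapsto\lambda+\theta$ leaves the quadratic coefficient invariant, so the variance stays equal to $\sigma^2$; the jump contribution rearranges as
\begin{equation*}
\int_\R\bigl(e^{\lambda x}-1-\lambda x\one_{\{|x|<1\}}\bigr)e^{\theta x}\Pi(dx)
 +\lambda\int_{\{|x|<1\}}x(e^{\theta x}-1)\Pi(dx),
\end{equation*}
which identifies $e^{\theta x}\Pi(dx)$ as the new jump measure, plus a finite linear correction that, combined with $(b-c)\lambda$ and the Gaussian cross term $\sigma^2\theta\lambda$, reproduces the drift $\psi'(\theta)-c$ on the nose.

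The main obstacle is the bookkeeping in this last step: the tilt $e^{\theta x}$ can, a priori, spoil the truncation convention for the compensator of small jumps with respect to the new measure $e^{\theta x}\Pi(dx)$, so one must re-absorb the finite correction $\lambda\int_{|x|<1}x(e^{\theta x}-1)\Pi(dx)$ (finite by the $(1\wedge x^2)$-integrability of $\Pi$) into the drift to recover a canonical L\'evy-Khintchine representation. Uniqueness of the triplet then finishes the identification.
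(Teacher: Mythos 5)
The first thing to know is that the paper contains no proof of this lemma: it is quoted as a standard result (the Esscher, or exponential-tilting, change of measure for L\'evy processes), so there is no argument of the authors' to compare yours against line by line. Your proof is the standard one and is correct in structure and substance: the Wald-type martingale property from stationary independent increments, consistency of \reff{change.of.measure}, identification of the tilted Laplace exponent $\tilde\psi(\lambda)=\psi_c(\lambda+\theta)-\psi_c(\theta)$ via the abstract Bayes formula, and term-by-term matching with the L\'evy--Khintchine form. One point in your favor: the lemma as literally written is consistent only if $X$ under $\P$ denotes the \emph{drifted} process, with Laplace exponent $\psi_c$ (otherwise $\E[M^\theta_t]=e^{c\theta t}\neq 1$ and $M^\theta$ is not a martingale); by stipulating that the increments of $X$ have exponent $\psi_c$, you silently adopted the only reading under which the statement is true.

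Two small repairs. First, your closing claim that the finite correction $\lambda\int_{|x|<1}x(e^{\theta x}-1)\Pi(dx)$, combined with $(b-c)\lambda$ and the cross term $\sigma^2\theta\lambda$, ``reproduces the drift $\psi'(\theta)-c$ on the nose'' conflates the truncated L\'evy--Khintchine linear coefficient with the mean. What the matching actually yields is the linear coefficient $\tilde b=b-c+\sigma^2\theta+\int_{|x|<1}x(e^{\theta x}-1)\Pi(dx)$, which differs from $\psi'(\theta)-c$ by the large-jump contribution $\int_{|x|\ge 1}x e^{\theta x}\Pi(dx)$. This is harmless here because the ``drift'' in the statement means $\E_{\tilde\Q}(X_1)$, which you had already computed correctly as $\tilde\psi'(0)=\psi_c'(\theta)$; alternatively, one checks directly that $\tilde b+\int_{|x|\ge 1}x e^{\theta x}\Pi(dx)=\psi'(\theta)-c$. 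Second, to legitimately call $e^{\theta x}\Pi(dx)$ the new jump measure you should note that $\int_\R (1\wedge x^2)\,e^{\theta x}\Pi(dx)<\infty$, which follows from $|\psi(\theta)|<\infty$ for $\theta\in(\theta^\star_-,\theta^\star_+)$; the same condition is what guarantees the integrability used in your martingale step and the finiteness of $\tilde\psi$ in a neighborhood of $0$. With these two clarifications your argument is complete.
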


\subsection{Some useful results on branching L\'evy processes}

Consider a continuous time branching process with binary branching at rate $r>0$. Each individual performs 
independent L\'evy processes with generator $\L$ started at the position of his 
ancestor at her birth-time. Details on the construction of this process can be found in \cite{Ky}. 
Call $N_t$ the number of individuals in the process at time $t$ and $(\zeta_t^i, 
1\le i \le t)$ the positions of the individuals that are alive at time $t$. We 
call $Z_t=(\zeta^1_t,\dots, \zeta^{N_t}_t)$ and $Z=(Z_t)_{t\ge 0}$ a branching L\'evy process (BLP) driven by $\L$. 
For some results, we need to consider BLP killed at some barrier $x\in\R$, the 
extension of the definition to this situation is straightforward.

The following proposition is proved in \cite{Biggins1,Biggins2}. See also \cite[Theorem 4.17]{BO} for an alternative proof with spines and a setting closer to ours.

\begin{prop}\label{prop:Rt}
Let $Z$ be a BLP driven by $\L$ and $R_t$ the position of the maximum of $Z_t$.
Then
$$ \lim_{t\to\infty}{R_t \over t} =\Gamma^{-1}(r).$$
\end{prop}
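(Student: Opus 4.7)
The plan is to prove the two bounds $\limsup_t R_t/t \leq \Gamma^{-1}(r)$ and $\liminf_t R_t/t \geq \Gamma^{-1}(r)$ separately, both built on the many-to-one identity
\[
\E\Bigl[\sum_{i=1}^{N_t} f(\zeta^i_t)\Bigr] = e^{rt}\, \E[f(X_t)],
\]
which follows from the branching property together with $\E(N_t)=e^{rt}$ (so that a uniformly chosen particle at time $t$ has the same law as $X_t$).

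For the upper bound I would apply the exponential Chebyshev inequality: for any $\theta \in (0,\theta^\star_+)$,
\[
\P(R_t \geq at) \leq e^{-\theta a t}\, \E\Bigl[\sum_i e^{\theta \zeta^i_t}\Bigr] = e^{t(r+\psi(\theta)-\theta a)}.
\]
Optimizing over $\theta$ yields the Cram\'er-type bound $\P(R_t \geq at) \leq e^{-t(\Gamma(a)-r)}$. Under assumption \textbf{A}, $\Gamma$ is strictly convex with $\Gamma(0)=0$ (attained at $\theta=0$ because $\psi'(0)=0$) and so is strictly increasing on $[0,\infty)$; for $a > \Gamma^{-1}(r)$ the exponent is negative, and a Borel--Cantelli argument along integer times combined with standard right-continuity estimates to interpolate on $[n,n+1]$ gives $\limsup_t R_t/t \leq \Gamma^{-1}(r)$ almost surely.

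For the lower bound the key object is the additive martingale
\[
W_t^\theta := \sum_{i=1}^{N_t} e^{\theta \zeta^i_t - (r+\psi(\theta))t}, \qquad \theta \in (0,\theta^\star_+),
\]
whose martingale property follows from the many-to-one identity and the branching property. A short calculus argument shows that the map $\theta \mapsto (r+\psi(\theta))/\theta$ attains its minimum on $(0,\theta^\star_+)$ at the unique $\theta^*$ satisfying $\theta^*\psi'(\theta^*)-\psi(\theta^*)=r$, and this minimum equals $\psi'(\theta^*)=\Gamma^{-1}(r)$. For $\theta<\theta^*$ Biggins' $L\log L$ criterion is satisfied, hence $W_t^\theta \to W_\infty^\theta$ in $L^1$ with $W_\infty^\theta>0$ $\P$-almost surely. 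I would then introduce the spine-biased measure $\Q^\theta$ with $d\Q^\theta/d\P|_{\F_t}=W_t^\theta$; as in \cite{BO}, under $\Q^\theta$ a distinguished spine particle evolves as a L\'evy process of drift $\psi'(\theta)$, and since the spine is one of the $\zeta^i_t$, we get $\liminf_t R_t/t \geq \psi'(\theta)$ $\Q^\theta$-a.s. The equivalence $\P\sim\Q^\theta$ on the non-extinction event (which is the whole space here) transfers this to $\P$-a.s.; letting $\theta\uparrow\theta^*$ and using continuity of $\psi'$ concludes.

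The main obstacle is the lower bound: verifying the $L\log L$ condition to guarantee that $W_\infty^\theta$ is non-degenerate, and then correctly implementing the change of measure to extract an almost-sure (rather than merely in-mean) statement about the rightmost particle. Both steps are classical in the branching random walk literature, and \cite{Biggins1,Biggins2,BO} supply everything needed in the continuous-time L\'evy setting under assumption \textbf{A}.
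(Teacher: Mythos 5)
The paper does not actually prove Proposition \ref{prop:Rt}: it is quoted from \cite{Biggins1,Biggins2}, with \cite[Theorem 4.17]{BO} cited for a spine-based proof. Your two-sided strategy --- many-to-one plus a Chernoff bound and Borel--Cantelli for $\limsup R_t/t\le\Gamma^{-1}(r)$, and the additive martingale $W^\theta$ with Biggins' $L\log L$ criterion and the spine change of measure for the lower bound --- is precisely the argument of those references, so in spirit you are reproducing the proof the paper points to. The upper bound is fine as written (the identity $\Gamma^{-1}(r)=\inf_{\theta>0}(r+\psi(\theta))/\theta$ and the interpolation between integer times are both routine), and the Legendre-duality computation identifying the critical $\theta^*$ via $\theta^*\psi'(\theta^*)-\psi(\theta^*)=r$ and $\psi'(\theta^*)=\Gamma^{-1}(r)$ is correct.

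There is one genuine gap in the lower bound: you assert that $\theta\mapsto(r+\psi(\theta))/\theta$ \emph{attains} its minimum at an interior point $\theta^*\in(0,\theta^\star_+)$. Since $\theta\psi'(\theta)-\psi(\theta)=\int_0^\theta s\,\psi''(s)\,ds$ is increasing, an interior minimizer exists iff $r<\Gamma\bigl(\psi'(\theta^\star_+-)\bigr)$; assumption {\bf A} permits $\theta^\star_+<\infty$ with $\psi$ and $\psi'$ finite up to the boundary, in which case this fails for large $r$. In that regime your spine argument, run for $\theta<\theta^\star_+$, only yields $\liminf R_t/t\ge\psi'(\theta^\star_+-)$, which is strictly smaller than $\Gamma^{-1}(r)=(r+\psi(\theta^\star_+))/\theta^\star_+$, so the two bounds do not meet. (When $\theta^\star_+=\infty$ the issue disappears, because $\sigma>0$ gives $\theta\psi'(\theta)-\psi(\theta)\ge\sigma^2\theta^2/2\to\infty$.) To cover the boundary case you need a different lower-bound mechanism --- Biggins' subadditive/first-birth argument, or a truncated second-moment estimate on $\#\{i:\zeta^i_t\ge at\}$ using only Cram\'er's lower bound for $\P(X_t\ge at)$, which is valid for all $a$ --- and this is exactly what the cited papers of Biggins supply. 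Either restrict your spine proof to the case where $\theta^*$ is interior and invoke \cite{Biggins1,Biggins2} for the remaining case, or replace the lower bound by the second-moment argument.
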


By means of this proposition we obtain the following partial extension of Theorem 1 in \cite{Bigginsetal91}. 

\begin{prop}\label{prop:branching}
Let $\mathring Z$ be a BLP driven by $\L - c {d \over dx}$ started at $x>0$ and killed at the origin. 
\begin{enumerate}
\item[(i)] If $r\le \Gamma(c)$, then $\mathring Z$ gets extinct with probability $1$. 

\item[(ii)] If $ r> \Gamma(c)$, then  for any interval $A\subset \R^+,$ $\P (\sum_{i=1}^{N_t}\one_{\{\mathring\zeta_t^i \in A\}} \to \infty) >0$.
\end{enumerate}
\end{prop}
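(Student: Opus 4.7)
The plan is to reduce both statements to Proposition \ref{prop:Rt} applied to the unkilled branching L\'evy process, supplemented by the many-to-one formula and the exponential change of measure of Lemma \ref{Girs.Levy}. For part (i) in the strictly subcritical regime $r < \Gamma(c)$, let $Z$ be the BLP driven by $\L$ with maximum $R_t$. The unkilled BLP driven by $\L - c\frac{d}{dx}$ has the same genealogy as $Z$ with every particle shifted by $-ct$, so its maximum equals $R_t - ct$. Proposition \ref{prop:Rt} gives $R_t/t \to \Gamma^{-1}(r)$ almost surely, where $\Gamma^{-1}$ denotes the positive branch of the Legendre inverse, well defined since $\psi'(0) = 0$ places the minimum of $\Gamma$ at the origin. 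Since $r < \Gamma(c)$ forces $\Gamma^{-1}(r) < c$, we obtain $R_t - ct \to -\infty$ a.s. As $\mathring Z$ is a subpopulation of the unkilled drifted BLP, its own maximum is also eventually negative, and by that time every particle has been absorbed at $0$.

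The critical case $r = \Gamma(c)$ is the main obstacle, since Proposition \ref{prop:Rt} only provides $R_t - ct = o(t)$. I would handle it by working at the critical tilt $\theta_c>0$ where $\psi'(\theta_c) = c$, equivalently the point realizing $\Gamma(c) = c\theta_c - \psi(\theta_c)$. Writing $\eta_t^i$ for the positions of the unkilled drifted BLP, the identity $r + \psi(\theta_c) - c\theta_c = 0$ makes $W_t := \sum_i e^{\theta_c \eta_t^i}$ a nonnegative martingale; a spine/size-biasing argument, transported from the branching random walk setting to L\'evy motions via Lemma \ref{Girs.Levy}, gives $W_\infty = 0$ a.s.\ at criticality. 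Since $e^{\theta_c \eta_t^i} \ge 1$ whenever $\eta_t^i \ge 0$, this bound forces every particle of $\mathring Z$ (which lives above $0$ before being absorbed) to eventually disappear, yielding extinction.

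For part (ii), when $r > \Gamma(c)$, the many-to-one formula gives $\E^x[N_t(A)] = e^{rt}\,\P^x(\tau > t,\, X_t - ct \in A)$, where $N_t(A) := \sum_{i=1}^{N_t} \one_{\{\mathring\zeta_t^i \in A\}}$ and $\tau$ is the absorption time at $0$ of a single drifted L\'evy particle; standard large-deviation lower bounds for the centered L\'evy process make this quantity grow exponentially in $t$. To upgrade this expectation bound into $\P(N_t(A) \to \infty) > 0$, I would build a spine decomposition at a supercritical tilt $\theta > \theta_c$ (so that $\psi'(\theta) > c$) using Lemma \ref{Girs.Levy}: under the tilted law the spine drifts to $+\infty$, stays away from the origin, and emits independent subtrees at rate $r$ along its trajectory, each with uniform positive probability of leaving a descendant inside the fixed interval $A$. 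A Borel--Cantelli-type argument along the spine then produces infinitely many particles in $A$ with positive probability under the original law. The extra technical burden compared with the unkilled case is a uniform lower bound on the probability that the spine and the relevant subtrees avoid the absorbing boundary, handled by tail estimates for the L\'evy process conditioned to stay positive.
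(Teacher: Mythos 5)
Your part (i) is essentially the paper's own argument. The strictly subcritical case via Proposition \ref{prop:Rt} is identical, and at criticality you pick the same object the paper does: the additive martingale at the critical tilt $\theta_c$ (your $W_t$ is exactly the paper's $Z^{\theta_c}_t$, since $\psi_c(\theta_c)+r=0$), you invoke its degeneracy $W_\infty=0$, and you conclude extinction from $W_t\ge e^{\theta_c\eta^i_t}\ge 1$ on survival. The one thing you assert rather than prove is precisely the crux: the paper's proof spends its effort showing $W_\infty=0$ a.s., by bounding $Z^{\theta_c}_t$ below by the spine's contribution $e^{\theta_c S_t}$, using recurrence of the centered spine to get $\limsup_t Z^{\theta_c}_t=\infty$ under $\Q$, combining this with the supermartingale property of $1/Z^{\theta_c}$ to get $Z^{\theta_c}_t\to\infty$ under $\Q$, and then reading off $Z^{\theta_c}_t\to 0$ under $\P$ from the Lebesgue decomposition of $\Q$ with respect to $\P$ on $\F_\infty$. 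Citing ``a spine/size-biasing argument'' is naming the right tool, but at the boundary parameter this is exactly the step that needs to be carried out.

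Part (ii) is where you genuinely diverge from the paper, and where the gaps are. First, the growth of $\E^x[N_t(A)]=e^{rt}\P^x(\tau>t,\,X_t-ct\in A)$ is not a ``standard large-deviation lower bound'': the nontrivial point is that the all-time positivity constraint $\min_{0\le s\le t}(X_s-cs)\ge 0$ does not change the exponential decay rate, i.e.\ that the decay parameter of the killed drifted process equals $\Gamma(c)$; the paper gets this from Iglehart's theorem and Tuominen--Tweedie, and a naive LDP for the endpoint alone only bounds the constrained probability from above. Second, and more seriously, your spine-plus-Borel--Cantelli scheme would at best produce infinitely many particles \emph{entering} $A$ over the whole time axis, whereas the statement requires the count $N_t(A)$ \emph{at a common time} $t$ to diverge; and your ``uniform positive probability'' that a subtree born at the spine's (linearly escaping) position deposits a descendant in the fixed bounded interval $A$ is not justified --- a descendant must travel a growing distance down to $A$ while avoiding the absorbing origin, and a birth-height-uniform lower bound for that is essentially the content of (ii) itself, so the argument risks circularity. (You would also need uniform integrability of the tilted martingale to transfer positive $\Q$-probability back to $\P$.) The paper avoids all of this with a more elementary device: choose $t^*$ so that $\E^x[\mathring Z_{t^*}(A)]>1$ with $x=\inf A$, embed a supercritical Galton--Watson process whose generation $n$ minorizes $\mathring Z_{nt^*}(A)$, obtain exponential growth along the times $nt^*$ on an event of positive probability, and then interpolate between consecutive times with a conditional Borel--Cantelli estimate. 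You would need to either adopt that structure or supply the missing uniform estimates to make your route rigorous.
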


\begin{proof}
Observe that $\mathring Z$ can be constructed straightforward with the trajectories of a non-absorbed process driven by the same generator. We just need to delete all the paths that touched the negative semi-axes at some time.
In the case $r < \Gamma(c)$, we can directly use the previous proposition to see that the maximum of the non-absorbed branching process satisfies ${R_t \over t} \to \Gamma^{-1}(r) - c <0$ which 
implies that $R_t$ is almost surely negative after some finite time. This in turn implies extinction of $\mathring Z$. For the critical case, we need to slightly refine the arguments given in \cite{BO}. 

Consider the branching L\'evy process $Z$ driven by $\L$ (without killing at $0$) defined in the same filtered space $(\Omega, \F, (\F_t),\P)$ and define the martingale
$$ Z^\theta_t= \sum_{i=1}^{N_t}\exp(\theta \zeta^i_t -(\psi_c(\theta) + r) t ),$$ 
as well as the change of measure,
$$
{d\Q \over d\P} \big|_{\F_t}= Z^{\theta}_t.
$$
On some suitably augmented filtration $\tilde \F_t \supset \F_t$, the new process can be seen as a branching process with a spine $(S_t)_{t\ge 0}$ which branches
at rate $2r$ and follows a motion given by the change of measure \eqref{change.of.measure},
i.e., a L\'evy process with drift $\psi'_c(\theta)= \psi'(\theta) - c$,
variance $\sigma^2$, and jump measure $ e^{\theta x} d\pi(x)$. The other particles follow the usual process $X$. See \cite{BO} for details on this construction.

Since we assumed $r=\Gamma(c)$, we can define $\theta_c$ such that $\psi_c(\theta_c)=-\Gamma(c)$ and so $\psi'_c(\theta_c)= 0$. From now on we choose $\theta=\theta_c$ in the change of measure and hence, the spine $(S_t)$ is centered.
As a consequence, it is recurrent (as a non trivial L\'evy process). It follows that  $\limsup_t S_t  =\infty $. Now bounding $Z^{\theta_c}_t$ by the contribution of the spine, we have 
$$ \limsup Z^{\theta_c}_t \ge \limsup \exp(\theta_c S_t -( \psi_c(\theta_c)+r )t )=  \exp(\theta_c S_t ).$$
Since $1/Z^{\theta_c}$ is a positive super-martingale (under $\Q$), it converges $\Q-$almost surely and so does $Z^{\theta_c}$.
Hence,
$$ \lim_{t\to\infty} Z^{\theta_c}_t= \infty, \quad \Q-\text{a.s.}$$
Observe that if $B\in \F_\infty$ we have
\[
 \Q(B) = \int_B \limsup_{t \to \infty} Z^\theta_t \, d\P + \Q(B\cap \{\limsup_{t \to \infty} Z^\theta_t = \infty \}).
\]
It then follows that if $ \lim Z^{\theta_c}_t= \infty$, under $\Q$,
then $ \lim Z^{\theta_c}_t= 0,$ under $\P$. Finally, let 
\[
R_t := \max_{1\le i\le N_t} \zeta^i_t -ct,
\]
and observe that $\exp(\theta_c R_t ) \le Z^{\theta_c}_t,$ which implies that $\exp(\theta_c R_t)$ tends to $0$ $\P-$a.s. and hence $R_t$ tends to $-\infty$.
As before, this implies extinction of $\mathring Z$.

\

To prove {\em (ii)}, denote $\mathring Z_t(A):= \sum_{i=1}^{N_t}\one_{\{\mathring \zeta_t^i \in A\}}$. We use the many-to-one lemma to get
\begin{equation}
\label{exp.zeta}
\E(\mathring Z_t(A)) = e^{rt}\P(X_t - ct \in A, \min_{0\le s \le t}X_s - cs \ge 0).
\end{equation}
To compute the last probability we can discretize the time variable and consider the random walk $S^\delta_n = X_{n\delta} - c \delta n$. Following \cite[Theorem 4]{TuominenTweedie} and \cite[Theorem 2.1]{Iglehart} we obtain that the decay parameter for the process $(X_t -ct)$ killed at zero is given by $\Gamma(c)$ and hence for every $r >\Gamma(c)$ the r.h.s of \eqref{exp.zeta} grows to infinity.
So, for any $x>0$ we can choose $t^*$ large enough to guarantee $\E^x(\mathring Z_{t^*}(A)) > 1$. Let $x=\inf A$. We can assume $x>0$ without loss of generality. Consider the (discrete time) Galton-Watson process with offspring distribution $\mathring Z_{t^*}(A)$, started with one individual at $x$. This process at time $n$ bounds from below $\mathring Z_{nt^*}(A)$ and since it is supercritical we have that $\mathring Z_{nt^*}(A)$ grows exponentially fast as $n\to\infty$ with positive probability.
Now,  
\begin{align*}
\P\Big(\mathring Z_s(A) \le \frac{\mathring Z_{nt^*}(A)}{2} \text{ for some } & nt^* \le s \le (n + 1)t^*\Big | \mathring Z_{nt^*}(A) \Big) \le \\
& \P^x(X_s - cs \le 0 \text{ for some } 0\le s \le t^*)^{\mathring Z_{nt^*}(A)/2}
\end{align*}
and the conditional Borel-Cantelli lemma \cite[p. 207]{Durrett1991} implies the result.

\end{proof}

\section{Quasi-stationary distributions and traveling waves}

In this section we prove the equivalence between existence of traveling waves and quasi-stationary distributions. The proof boils down to show that both are equivalent to the absorption of a BLP driven by $\L - c\frac{d}{dx}$ and killed at the origin.

\subsection{Existence of Quasi-stationary disributions}

We first deal with the quasi-stationary distributions.

\begin{prop}
\label{prop:QSD}
The following are equivalent
\begin{enumerate}
\item  There exists a QSD for $\L - c {d \over dx}$ killed at 0 with mean absorption time $1/r$.
\item $r\le \Gamma(c)$.
\end{enumerate}
\end{prop}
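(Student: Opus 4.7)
The plan is to prove the two implications separately, in each case linking the statement to the branching L\'evy process of Proposition \ref{prop:branching}.

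\textbf{$(1)\Rightarrow(2)$ by a many-to-one/Fatou argument.} Assume $\nu$ is a QSD with mean absorption time $1/r$, so $\P^\nu(\tau>t)=e^{-rt}$ for every $t\ge 0$. Let $\mathring Z$ be a BLP driven by $\L-c\frac{d}{dx}$ and killed at the origin, with the initial particle distributed according to $\nu$. The many-to-one formula (already used in the proof of Proposition \ref{prop:branching}) gives
\[
\E^\nu[N_t]\;=\;e^{rt}\,\P^\nu(\tau>t)\;=\;1 \qquad \text{for every }t\ge 0.
\]
If we had $r>\Gamma(c)$, Proposition \ref{prop:branching}\,(ii) would tell us that for every $x>0$ and every bounded interval $A\subset(0,\infty)$ the count $\sum_{i=1}^{N_t}\one_{\mathring\zeta_t^i\in A}$ tends to $\infty$ with positive probability. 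Since this count is dominated by $N_t$, Fatou's lemma forces $\E^x[N_t]\to\infty$ for every $x>0$, and a second application of Fatou to $\E^\nu[N_t]=\int \E^x[N_t]\,\nu(dx)$ yields $\liminf_{t\to\infty}\E^\nu[N_t]=\infty$, contradicting the identity above. Hence $r\le \Gamma(c)$.

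\textbf{$(2)\Rightarrow(1)$ by a Girsanov construction.} Assume $r\le\Gamma(c)$. Since $\psi_c$ is strictly convex with $\psi_c(0)=0$, $\psi'_c(0)=-c<0$ and $\min\psi_c=-\Gamma(c)$, there exists $\theta_r\in(0,\theta^\star_+)$ with $\psi_c(\theta_r)=-r$ and $\psi'_c(\theta_r)\ge 0$. I would apply the Girsanov transform of Lemma \ref{Girs.Levy} with this parameter: under $\tilde\Q$, the process $Y_t:=X_t-ct$ becomes a L\'evy process with non-negative drift. In the subcritical case $r<\Gamma(c)$, the drift is strictly positive, so the survival function $\tilde h(x):=\tilde\Q^x(\tau=\infty)$ is positive on $(0,\infty)$ and harmonic for the killed tilted semigroup. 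Combining $\tilde h$ with the analogous construction applied to the dual process (equivalently, performing a Doob $h$-transform of the tilted process and inverting the change of measure) produces a positive integrable solution $v$ of
\[
\L^*v+cv'+rv=0 \quad\text{on }(0,\infty),\qquad v(0)=0,
\]
whose normalisation is the density of a QSD for $\L-c\frac{d}{dx}$ with mean absorption time $1/r$; absolute continuity is guaranteed by $\sigma>0$.

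The main obstacle is the critical case $r=\Gamma(c)$ in $(2)\Rightarrow(1)$: the tilted drift vanishes and $\tilde h\equiv 0$, so the $h$-transform construction degenerates. One recovers the critical QSD as a monotone limit of the subcritical densities obtained for $r_n\uparrow\Gamma(c)$ (or, alternatively, via a $Q$-process / Martin boundary construction for the recurrent tilted walk), in analogy with the appearance of $xe^{-cx}$ for Brownian motion at $r=c^2/2$.
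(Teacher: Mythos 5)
Your implication $(1)\Rightarrow(2)$ is correct and takes a genuinely different route from the paper. The paper builds the martingale $\sum_i v(\mathring{\bar\zeta}^i_t)$ from the QSD \emph{density} $v$ and then has to fight the fact that $\inf_{\R^+}v=0$ by truncating to $[\frac1n,n]$ and proving $\lambda_n\searrow\Gamma(c)$ for the associated decay parameters. By integrating the many-to-one identity against $\nu$ itself, so that $\E^\nu[N_t]=e^{rt}\P^\nu(\tau>t)=1$ exactly, you bypass that truncation entirely and only need Proposition \ref{prop:branching}(ii) (or just the fact, established in its proof, that the decay parameter is $\Gamma(c)$) plus Fatou. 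This is a clean simplification and the logic is sound; there is no circularity since Proposition \ref{prop:branching} is proved independently.

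The implication $(2)\Rightarrow(1)$, however, has a genuine gap, and it sits exactly where the proposition's novelty lies: the critical case $r=\Gamma(c)$ (the paper's own remark points out that the subcritical case is essentially known from \cite{KP1}). You acknowledge that your survival-function $h$ degenerates there, but the proposed fix --- a ``monotone limit'' of subcritical densities as $r_n\uparrow\Gamma(c)$ --- is not an argument: no monotonicity is established, and since the tilted drifts tend to $0$ the unnormalised survival functions tend to $0$ pointwise, so you would have to control the normalisation rate to show the limit is nontrivial and integrable, which is precisely the hard part. The paper's resolution is to replace the survival probability by the \emph{renewal function} of the descending ladder height process of the tilted process: by Lemma 1 of [CD] this is harmonic for the killed process whenever the process does not drift to $-\infty$, which covers the oscillating (zero-drift) critical case, and the renewal theorem gives $h(x)\sim x$, so $e^{-\theta_c x}h(x)$ is integrable --- this is the missing idea, and it is what produces the analogue of $xe^{-cx}$ you allude to. Two further problems in your subcritical sketch: (a) the root $\theta_r$ with $\psi_c(\theta_r)=-r$ \emph{and} $\psi'_c(\theta_r)\ge 0$ need not exist when $\theta^\star_+<\infty$ (assumption {\bf A} only guarantees the smaller root, where $\psi'_c\le 0$); (b) tilting the forward process and taking its survival function yields a function $H$ with $(\L-c\frac{d}{dx})H=-rH$, i.e.\ a \emph{right} eigenfunction, not the density of a QSD, which must solve the adjoint equation $\L^*v+cv'+rv=0$. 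The QSD really does require the construction on the dual process with the smaller root, which you relegate to a parenthesis; making that precise is essentially the paper's computation with $v=e_{-\theta}h$ and $\tilde\L h=0$.
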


\begin{remark}
The existence of a QSD for $r=\Gamma(c)$ has been established in \cite{KP1} under stronger assumptions on the L\'evy process. 
\end{remark}

\begin{proof}

\vspace{.1cm}

{\em 1) $\Longrightarrow$ 2) (Non-existence).} Assume there exists a non-trivial QSD $\nu$ and suppose $r >  \Gamma(c)$. Since $\sigma>0$, 
there necessarily exists a density $v$ being the Radon-Nikodym derivative of $\nu$ with respect to the Lebesgue measure on $\mathbb R_+$.
Note that $v(0)=0$ and on $\mathbb R_+$ we have
$$ \L^* v + c v' +r v=0.$$
Let $\mathring{ \bar Z} =(\mathring{\bar \zeta}^1_t , \dots, \mathring{\bar \zeta}^{N_t}_t )$ be a branching L\'evy process driven by $\L^* + c\frac{d}{dx}$  killed at $0$ and started at $x>0$.
The process
$$M_t = \sum_{i=1}^{N_t} v(\mathring{\bar \zeta}_i(t) ),$$
is a martingale. 
On the other hand, for every $A \subset \mathbb R^+$,
\begin{equation}
\label{Mt}
\E^x(M_t) \ge ( \inf_A v ) \E^x \sum_{i=1}^{N_t}\one_{\{\mathring{\bar \zeta}_t^{i}\in A\}} = ( \inf_A v ) e^{rt} \P^x  ( -X_t + ct \in A, \min_{0\le s \le t} -X_s + cs \ge 0 ). 
\end{equation}


We want to show that the r.h.s in \eqref{Mt} goes to infinity. Observe that if we take $A=\R^+$ we know the asymptotic behavior of the probability on the r.h.s of \eqref{Mt}, but since $\inf_{R^+} v=0$ this is useless. So we need to choose a smaller $A$. Irreducibility implies that $\inf_A v >0$ for every $A \subset \R_+$ bounded and at a positive distance from the origin. We are going to choose $A=[\frac 1n,n]$ for an adequate $n>0$.
Consider the process $X^n=(X^n_t)_{t\ge 0}$ with generator $\L - c\frac{d}{dx}$ killed at $\frac 1 n$ and $n$ and call $p_n(x,t,B)=\P^x (X^n_t \in B)$ the transition semigroup and $\lambda_n$ its decay parameter (\cite[Theorem 6]{TuominenTweedie}) such that for every interval $B$
\[
- \lim_{t \to \infty} \frac 1 t  \log p_n(x,t,B)= \lambda_n.
\]
We use $p_\infty, \lambda_\infty$, etc. when we deal with the process in $\R^+$ killed at the origin. We will show that $\lambda_n \searrow \lambda_\infty=\Gamma(c)$ and hence, since $r>\Gamma(c)$ we can choose $n$ such that $r-\lambda_n>0$ and the r.h.s of \eqref{Mt} goes to infinity. A contradiction to the fact that $M_t$ is a martingale. Here we are using the fact that the exit problem from $[\frac 1 n, n]$ for a process with generator $\L^* + c\frac{d}{dx}$ started at $x$ is equivalent to the exit problem from the same interval for a process with generator $\L - c\frac{d}{dx}$ started at $y=n-x + \frac 1 n$.

Since $(\lambda_n)$ is decreasing in $n$, we only need to show $\lim \lambda_n \le \lambda_\infty$. By means of time-discretization, using the splitting technique (which allows us to assume that $X_t^n$ has an atom) 
and the subadditive ergodic theorem \cite[Section 4]{TuominenTweedie}, it can be shown that there exists a sequence of times $t_k \nearrow \infty$, $\ve >0$ and a constant $c>0$, both depending on $x$ and $\ve$ but not on $n$ such that
\[
-\frac{1}{t_k} \log p_n(y,t_k,(y-\ve,y+\ve)) + \frac{c}{t_k} \ge \lambda_n.
\]
For fixed $t_k<\infty$ we can take $n\to \infty$ to obtain
\[
-(1/t_k) \log p_\infty(y,t_k,B) + \frac{c}{t_k} \ge  \lim_{n\to\infty}\lambda_n.
\]
Now we let $k \to \infty$ to get $\lambda_\infty \ge \lim_n \lambda_n$. The fact that $\lambda_\infty=\Gamma(c)$ was already shown in the course of the proof of Proposition \ref{prop:branching}.



\medskip

{\em 2) $\Longrightarrow$ 1) (Existence)}. As before, note that $\nu$ is a QSD with density $v$ if and only if
\begin{equation}
\label{eq.int.density.qsd}
\int f( \L^* v + c v' + r v) =0,
\end{equation}
for all $f \in \mathcal D$ where $\mathcal D$ is a subset of the domain of the generator with killing, i.e. the original generator but with domain composed of functions vanishing at $0$, with the property that for every measurable set $A \subset \mathbb R_*^+$, there exists a sequence
$f_n$ in $\mathcal D$, uniformly bounded and converging pointwise to $1_A$.
Let $\theta>0$ and denote by $e_{-\theta}$ the function $x \mapsto e^{-\theta x}$ and $v(x)=e^{-\theta x}h(x)$. The function $h\colon \R_{\ge 0} \to \R$ will be determined later. Let $(X_t)_{t\ge 0}$ be a L\'evy process with generator $\L$. We compute
\begin{align*}
\left( \L^*+c\frac{d}{dx}\right) v(x) &= \frac{d}{dt}\E^0\left[e^{-\theta(x -X_t + ct)}h(x-X_t + ct)\right]_{t=0},\\
 &= e^{-\theta x}\frac{d}{dt}\left[ e^{\psi(\theta)t} \E^{0} \left(  e^{\theta X_t -(c\theta + \psi(\theta))t}h(x-X_t+ct)\right)\right]_{t=0}\\
& = e^{-\theta x}\frac{d}{dt}\left[ e^{(\psi(\theta) - c\theta)t} \tilde \E^{0} \left(h(x-X_t+ct)\right)\right]_{t=0}
\\
& = e^{-\theta x}\left( (\psi(\theta) - c\theta )h(x) + \tilde{\mathcal L} h(x) \right).
\end{align*}
here $\tilde \E$ denotes expectation under the measure $\tilde \Q$ defined by \eqref{change.of.measure} and $\tilde \L$ is the generator of a L\'evy process with drift $\tilde \E(\tilde X_1)= \tilde \E(- X_1) = c-\psi'(\theta)$, variance $\sigma^2$ and jump measure $ e^{-\theta x} d\pi(-x)$ as in Lemma \ref{Girs.Levy}. Hence
$$
\L^* v+ c v'+ rv = e^{-\theta x}\left( {(\psi(\theta) - c\theta + r )h(x)} + {\tilde \L} h(x) \right).
$$

We obtained that \eqref{eq.int.density.qsd} is equivalent to the following equation
$$ \int f e_{-\theta}( \tilde \L   h + (r+ \psi_c(\theta))h) =0,$$

Note that since $\psi$ is a convex function and $-\Gamma(c) \le -r$, it is possible to choose $\theta$ such that $\psi(\theta)- c \theta=-r$. Hence  \eqref{eq.int.density.qsd} is equivalent to 
$$ \int f  \tilde \L h  =0,$$
for all $f \in e_{-\theta} \mathcal D= \{ g = e_{-\theta} u, u \in \mathcal D \}$.
We then look for harmonic functions for the killed L\'evy process $\tilde X$ with generator
$\tilde \L$.

Define the renewal measure associated to $\tilde X$
$$ h(x) = \E \int_0^\infty \one_{\{\tilde H_t \ge x\}} dt,$$
where $\tilde H= (\tilde H_t)_{t\ge 0}$ is the ladder process associated to $-\tilde X$.

Let $\theta_c$ be defined by $\Gamma(c)=c\theta_c- \psi(\theta_c)$. For $\theta \le \theta_c$, the process $\tilde X$ does not drift to $-\infty$, since
$$\tilde \E^0(\tilde X_1)= c- \psi'(\theta) = -\psi_c'(\theta) \ge 0.$$
This implies that the function $h$ is harmonic (see Lemma 1 in \cite{CD}) and since moreover $\tilde X_1$ has a finite mean, the renewal theorem implies that  $h$ is asymptotically equivalent to the identity and so
$$ \int e_{-\theta} h < \infty.$$
Then $v$ is the density of a QSD with absorption rate $r$.  
\end{proof}

\subsection{Existence of Traveling waves}
We now present the corresponding equivalence for the case of traveling waves which was actually the inspiration for the
equivalence presented previously. Let us underline that these results are already known except in the critical case $r = \Gamma(c)$, \cite{Ky}. The proof is included for completeness but follows the proof of \cite{SH} who himself quote the results of Neveu \cite{neveu}.

\begin{prop}[\cite{neveu,SH}]
\label{prop:KPP}
The following are equivalent
\begin{enumerate}
\item  There exists a solution to \eqref{eq.tw.levy}.
\item $r\le \Gamma(c)$.
\end{enumerate}
\end{prop}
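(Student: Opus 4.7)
My plan follows the probabilistic approach of \cite{SH} and reduces the problem to a martingale analysis of a branching L\'evy process (BLP). Let $\bar Z$ denote the BLP whose particles move with generator $\L^* + c\frac{d}{dx}$ (equivalently, the dual L\'evy process with additional drift $c$) and branch at rate $r$. The key identity is that for any $w\colon\R\to[0,1]$ solving \eqref{eq.tw.levy}, the process
\[
M_t := \prod_{i=1}^{N_t} w(\bar\zeta^i_t)
\]
is a bounded $\P^x$-martingale with $\E^x[M_t]=w(x)$: the standard branching-diffusion computation reduces its infinitesimal drift to $\sum_i\{(\L^*+c\,d/dx)w + rw(w-1)\}(\bar\zeta^i_t) \cdot \prod_{j\ne i}w(\bar\zeta^j_t)$, which vanishes by \eqref{eq.tw.levy}. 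This martingale drives both implications.

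\textbf{Non-existence} (1$\Rightarrow$2): Suppose $w$ is a non-trivial TW and, for contradiction, that $r > \Gamma(c)$. Applying Proposition \ref{prop:Rt} to the BLP with motion $X-ct$ (whose Legendre transform equals $\Gamma(\cdot + c)$), the maximum travels at velocity $\Gamma^{-1}(r)-c$; by duality the minimum of $\bar Z$ moves at velocity $c-\Gamma^{-1}(r) < 0$, so $\min_i \bar\zeta^i_t \to -\infty$ almost surely. Since $w(-\infty)=0$ and $w$ is continuous, $w(\min_i \bar\zeta^i_t) \to 0$, and since this is a factor in the product $M_t\in[0,1]$, we get $M_t \to 0$ almost surely. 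Bounded convergence then yields $w(x) = \lim_t \E^x[M_t] = 0$ for every $x$, contradicting non-triviality.

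\textbf{Existence} (2$\Rightarrow$1): Fix $r \le \Gamma(c)$ and let $\theta_c > 0$ be the critical parameter from Proposition \ref{prop:branching}. In the strictly subcritical regime $r<\Gamma(c)$, pick $\theta \in (0,\theta_c)$ with $\psi_c(\theta)=-r$ (possible by strict convexity of $\psi_c$), consider the additive martingale $W_t^\theta$ of $\bar Z$ indexed by $\theta$, whose a.s.\ limit $W_\infty^\theta$ is non-degenerate by Biggins' theorem, and define a Neveu-type multiplicative functional of the form $w(x):=\E^x[\exp(-W_\infty^\theta)]$; differentiation shows it solves \eqref{eq.tw.levy} with the required boundary conditions. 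In the critical case $r=\Gamma(c)$ the additive martingale $W_t^{\theta_c}$ degenerates to zero (as in the proof of Proposition \ref{prop:branching}(i)), so we instead adapt the spine-renewal construction from the existence step of Proposition \ref{prop:QSD}: after the critical Girsanov change of measure, under which the spine becomes a centered L\'evy process, the renewal measure $h$ of its ascending ladder process supplies the harmonic correction needed to produce a non-trivial $w$.

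The main obstacle is the critical case $r=\Gamma(c)$, precisely as in Proposition \ref{prop:QSD}: the natural additive martingale is degenerate and Biggins' theorem no longer applies, so one must lift the linear renewal-theoretic construction from Proposition \ref{prop:QSD} to the non-linear TW equation. The non-existence direction and the strictly subcritical existence are essentially classical \cite{neveu, SH}.
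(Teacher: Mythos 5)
Your non-existence direction is essentially the paper's own argument: the multiplicative martingale $\prod_i w(\bar\zeta^i_t)$, the duality between the minimum of the $\L^*$-driven BLP and the maximum of the $\L$-driven one, Proposition \ref{prop:Rt}, and bounded convergence. That part is fine. For existence with $r<\Gamma(c)$, your route via the additive martingale $W^\theta_\infty$ and $w(x)=\E^x[\exp(-W^\theta_\infty)]$ is different from the paper's but is a legitimate classical construction (it yields the fixed-point identity $w(x)=\E^x\prod_i w(\bar\zeta^i_t)$ by the branching property, and then Theorem 8 of \cite{Ky} gives \eqref{eq.tw.levy}), provided you verify Biggins' non-degeneracy conditions for your choice of $\theta<\theta_c$.

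The genuine gap is the critical case $r=\Gamma(c)$, which is precisely the case the paper identifies as the new content of this proposition (``these results are already known except in the critical case''). You correctly observe that $W^{\theta_c}_t\to 0$ so your subcritical construction breaks down, but what you offer in its place --- ``adapt the spine-renewal construction from Proposition \ref{prop:QSD}'' so that the renewal function $h$ ``supplies the harmonic correction'' --- is not a proof: you never define the candidate $w$ at criticality, never show it is non-trivial, and never show it satisfies the non-linear equation. Carrying this out would essentially amount to establishing convergence and non-degeneracy of the derivative martingale, a substantial piece of work you have not done. The paper sidesteps this entirely with Neveu's construction, which treats $r\le\Gamma(c)$ uniformly: since the BLP driven by $\L-c\frac{d}{dx}$ killed at $0$ dies out a.s.\ whenever $r\le\Gamma(c)$ (Proposition \ref{prop:branching}(i), whose critical case is where the spine argument actually lives), the number $G_x$ of particles frozen at level $x$ is a.s.\ finite, $(G_x)_{x\ge0}$ is a continuous-time Galton--Watson process, and $w(x)=f_x^{-1}(s)$ with $f_x(s)=\E(s^{G_x})$ produces a convergent multiplicative martingale $w(x+y)^{G_y}$ and hence the fixed-point equation, with no case distinction. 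You should either adopt that construction or actually supply the derivative-martingale argument; as written, statement 2 $\Rightarrow$ 1 is only proved for $r<\Gamma(c)$.
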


\begin{proof}

{\em 1) $\Longrightarrow$ 2) (Non-existence).}
Assume the existence of a non-trivial traveling wave $w$. This allows us to define the multiplicative positive martingale
$$M_t= \prod_{i=1}^{N_t} w (\bar\zeta^{i}_t + ct).$$
Here $\bar Z_t=(\bar\zeta^1_t, \dots,\bar\zeta^{N_t}_t)$ is a BLP driven by $\L^*$ (with no killing).
This martingale being positive and bounded, it converges and its mean being $w(x)$, its limit is not $0$.
On the other hand, since $w \le 1$,
$$M_t \le w(\bar L_t + ct),$$
where $\bar L_t=\min_{1\le i\le N_t}\bar \zeta^{i}_t$.
Remark that the minimum of a BLP driven by $\L^*$ has the same law as $-\max_{1\le i\le \bar N_t}\zeta^{i}_t$,
where $Z= ((\zeta^i_t)_{1\le i\le N_t})_{t\ge 0}$ is a BLP driven by $\L$. Proposition \ref{prop:Rt} implies that if $r> \Gamma(c)$, $R_t -ct = \max_{1\le i\le \bar N_t}\zeta^{i}_t  - ct \to +\infty$. So
$\bar L_t   \to -\infty$ and hence $M_t$ should have a null limit. A contradiction to the the assumption.

{\em 2) $\Longrightarrow$ 1) (Existence).} Neveu's method for proving the existence of traveling waves consists in constructing a multiplicative martingale from a Galton-Watson process obtained as follows.
 
Consider $\mathring Z$ a BLP driven by $\L - c {d \over dx}$ with killing at the origin and started with one individual at $x>0$ as in Proposition \ref{prop:branching}. Since $r\le \Gamma(c)$ the process is absorbed and then the total population size is finite a.s. We can construct this random number for every $x>0$ using a unique BLP $\mathring{\bar Z}$ with generator $\L^* + c \frac{d}{dx}$ started with one individual at the origin and killing (freezing) at $x$. If we couple all the processes in this way and call $G_x <\infty$ the number of individuals of $\bar Z$ that have reached high $x$, we get that $(G_x)_{x\ge 0}$ is a continuous-time Galton-Watson process, \cite{neveu, SH}. Define
$$f_x(s)= \E(s^{G_x}),$$
and for some fixed $s \in (0,1)$
$$w(x)= f_x^{-1}(s).$$
Note that both quantities are strictly positive since $G_x < \infty$.  For $y\ge 0$ define
$$M^x_y:= w(x+y)^{G_y}.$$
It turns out that $(M^x_y)_{y\ge 0}$ is a convergent martingale. To see that, observe that the branching property gives us
\begin{align*}
\E[M^x_{y'} | \F_y]&=  \E[w(x+y')^{ G_{y'}} | \F_y],\\
&= (f_{y'-y}(w(x+y')))^{ G_{y}},\\
&= (f_{y'-y}(f_{y'-y}^{-1}(w(x+y))))^{G_y}= M^x_y.
\end{align*}
In addition, $(M^x_y)_{y\ge 0}$ is positive and bounded and hence, it does converge and is uniformly integrable. Following the arguments of \cite{SH}, for fixed $t$ and for all $y$ large enough
$$ G_y= \sum_{k=1}^{N_t} G^{i}_{y-\bar\zeta^i_t},$$
where the $(G^i)_{i\ge 1}$ are independent copies of $G=(G_x)_{x\ge 0}$. Hence
\[
M^x_y = \prod_{i=1}^{N_t} w(x+y)^ {G^{i}_{y-\bar\zeta^i_t}} = \prod_{i=1}^{N_t} M^{x+\bar\zeta^i_t,i}_{y-\bar\zeta^i_t}.
\]
and the limit of the martingale satisfies
$$
M^x = \prod_{i=1}^{N_t} M^{x+\bar\zeta^i_t,i}.
$$
taking expectations leads to
$$ w(x) = \E\prod_{i=1}^{N_t} w(x+\bar\zeta^i_t),$$
which in turn implies (see Theorem 8  in \cite{Ky}) that
$$
\L^* w + c w' + r(w^2-w)=0. 
$$

\end{proof}


\subsection{Proof of Theorem \ref{theo:main}}

Observe that Proposition \ref{prop:KPP} gives us $1) \iff 3)$ while Proposition \ref{prop:QSD} proves $2) \iff 3)$. The equivlence  $3) \iff 4)$ is the content of Proposition \ref{prop:branching}. Finally since $\Gamma$ is strictly increasing, minimality of $1/r$ (for a given $c$) as well as minimality of $c$, for a given $r$, reduces to 
\[
 r=\Gamma(c).
\]

\bibliographystyle{plain}
\bibliography{biblio2}

\end{document}